\newtheorem{theorem}{Theorem}[section]
\newtheorem{definition}[theorem]{Definition}
\newtheorem{proposition}[theorem]{Proposition}
\newtheorem{lemma}[theorem]{Lemma}
\newtheorem{claim}[theorem]{Claim}
\newtheorem{observation}[theorem]{Observation}
\newtheorem{conjecture}[theorem]{Conjecture}
\renewenvironment{proof}[1][Proof]{ \noindent \textbf{#1: }}{$\Box$
\bigskip}
\newcommand{\Ent}{{\rm Ent}}
\newcommand{\Inf}{{\rm I}}
\begin{document}

\title{A Note on the Entropy/Influence Conjecture}

\author{
Nathan Keller\thanks{Weizmann Institute of Science. Partially
supported by the Koshland Center for Basic Resarch. E-mail:
nathan.keller@weizmann.ac.il.}, Elchanan Mossel\thanks{U.C.
Berkeley and Weizmann Institute of Science. Supported by DMS
0548249 (CAREER) award, by DOD ONR grant N000141110140, by ISF
grant 1300/08 and by a Minerva Grant. E-mail:
mossel@stat.berkeley.edu.}, and Tomer Schlank\thanks{Hebrew
University. Partially supported by the Hoffman program for
leadership. E-mail: tomer.schlank@gmail.com.} }

\maketitle

\begin{abstract}

The entropy/influence conjecture, raised by Friedgut and
Kalai~\cite{Friedgut-Kalai} in 1996, seeks to relate two different
measures of concentration of the Fourier coefficients of a Boolean
function. Roughly saying, it claims that if the Fourier spectrum
is ``smeared out'', then the Fourier coefficients are concentrated
on ``high'' levels. In this note we generalize the conjecture to
biased product measures on the discrete cube, and prove a variant
of the conjecture for functions with an extremely low Fourier
weight on the ``high'' levels.

\end{abstract}

\section{Introduction}

\begin{definition}
Consider the discrete cube $\{0,1\}^n$ endowed with the product
measure $\mu_p=(p \delta_{\{1\}} + (1-p) \delta_{\{0\}})^{\otimes
n}$, denoted in the sequel by $\{0,1\}^n_p$, and let
$f:\{0,1\}^n_p \rightarrow \mathbb{R}$. The Fourier-Walsh
expansion of $f$ with respect to the measure $\mu_p$ is the unique
expansion
\[
f = \sum_{S \subset \{1,2,\ldots,n\}} \alpha_S u_S,
\]
where for any $T \subset \{1,2,\ldots,n\}$,\footnote{Throughout
the paper, we identify elements of $\{0,1\}^n$ with subsets of
$\{1,2,\ldots,n\}$ in the natural way.}
\[
u_S(T)=\Big(-\sqrt{\frac{1-p}{p}} \Big)^{|S \cap T|}
\Big(\sqrt{\frac{p}{1-p}} \Big)^{|S \setminus T|}.
\]
In particular, for the uniform measure (i.e., $p=1/2$),
$u_S(T)=(-1)^{|S \cap T|}$. The coefficients $\alpha_S$ are
denoted by $\hat f(S)$,\footnote{Note that since the functions
$\{u_S\}_{S \subset \{1,\ldots,n\}}$ form an orthonormal basis,
the representation is indeed unique, and the coefficients are
given by the formula $\hat f(S) = \mathbb{E}_{\mu_p} [f \cdot
u_S]$ .} and the level of the coefficient $\hat f(S)$ is $|S|$.
\end{definition}

Properties of the Fourier-Walsh expansion are one of the main
objects of study in discrete harmonic analysis. The entropy/influence
conjecture, raised by Friedgut and Kalai~\cite{Friedgut-Kalai} in
1996, seeks to relate two measures of concentration of the Fourier
coefficients (i.e. coefficients of the Fourier-Walsh expansion) of
Boolean functions. The first of them is the \emph{spectral entropy}.
\begin{definition}
Let $f:\{0,1\}^n_p \rightarrow \{-1,1\}$ be a Boolean function.
The spectral entropy of $f$ with respect to the measure $\mu_p$ is
\[
\Ent_p(f)=\sum_{S \subset \{1,\ldots,n\}} \hat f(S)^2 \log
\left(\frac{1}{\hat f(S)^2} \right),
\]
where the Fourier-Walsh coefficients are computed w.r.t. to
$\mu_p$.
\end{definition}

Note that by Parseval's identity, for any Boolean function we have
$\sum_S \hat f(S)^2 =1$, and thus, the squares of the Fourier
coefficients can be viewed as a probability distribution on the
set $\{0,1\}^n$. In this notation, the spectral entropy is simply
the entropy of this distribution, and intuitively, it measures how
much are the Fourier coefficients ``smeared out''.

\medskip

The second notion is the \emph{total influence}.
\begin{definition}
Let $f:\{0,1\}^n_p \rightarrow \{0,1\}$. For $1 \leq i \leq n$,
the influence of the $i$-th coordinate on $f$ with respect to
$\mu_p$ is
\[
\Inf_i^p(f)=\Pr_{x \sim \mu_p} [f(x) \neq f(x \oplus e_i)],
\]
where $x\oplus e_i$ denotes the point obtained from $x$ by
replacing $x_i$ with $1-x_i$ and leaving the other coordinates
unchanged.

\medskip

\noindent The total influence of the function $f$ is
\[
\Inf_p(f)=\sum_{i=1}^n \Inf_i^p(f).
\]
\end{definition}

Influences of variables on Boolean functions were studied extensively
in the last decades, and have applications in a wide variety of fields,
including Theoretical Computer Science, Combinatorics, Mathematical
Physics, Social Choice Theory, etc. (see, e.g., the survey~\cite{Kalai-Safra}.)
As observed in~\cite{KKL}, the total influence can be expressed
in terms of the Fourier coefficients:
\begin{observation}
Let $f:\{0,1\}^n_p \rightarrow \{-1,1\}$. Then
\begin{equation}\label{Eq:Intro-new1}
\Inf_p(f)=\frac{1}{4p(1-p)} \sum_S |S| \hat f(S)^2.
\end{equation}
In particular, for the uniform measure $\mu_{1/2}$,
$\Inf_{1/2}(f)=\sum_S |S| \hat f(S)^2$.
\end{observation}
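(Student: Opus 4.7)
The plan is to establish the identity one coordinate at a time and then sum over $i$. Since $f$ takes values in $\{-1,1\}$, the event $f(x) \neq f(x \oplus e_i)$ coincides with $(f(x)-f(x \oplus e_i))^2 = 4$, so I would begin by rewriting
\[
\Inf_i^p(f) = \frac{1}{4}\,\mathbb{E}_{\mu_p}\bigl[(f(x) - f(x \oplus e_i))^2\bigr].
\]
A small but useful observation is that $f(x) - f(x \oplus e_i) = \pm(f_{i \mapsto 1}(x) - f_{i \mapsto 0}(x))$, where $f_{i \mapsto b}$ denotes the restriction obtained by fixing the $i$-th coordinate to $b$; hence the inner expectation equals $\mathbb{E}_{\mu_p^{(n-1)}}[(f_{i\mapsto 1}-f_{i\mapsto 0})^2]$, a quantity that depends only on the $n-1$ remaining coordinates.

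Next I would plug in the Fourier-Walsh expansion $f = \sum_S \hat f(S)\, u_S$ and evaluate $u_S$ on $x_i=1$ versus $x_i=0$. If $i \notin S$, the character $u_S$ does not depend on $x_i$ and the difference vanishes. If $i \in S$, then $u_S(x) = u_{S \setminus \{i\}}(x)\cdot Y_i(x_i)$, where $Y_i$ takes the value $-\sqrt{(1-p)/p}$ at $x_i=1$ and $\sqrt{p/(1-p)}$ at $x_i=0$. A direct computation gives
\[
u_{S}\big|_{x_i=1} - u_{S}\big|_{x_i=0} = -\frac{1}{\sqrt{p(1-p)}}\; u_{S \setminus \{i\}}(x),
\]
so summing over $S \ni i$ yields
\[
f_{i\mapsto 1} - f_{i\mapsto 0} = -\frac{1}{\sqrt{p(1-p)}} \sum_{S \ni i} \hat f(S)\, u_{S\setminus\{i\}}.
\]

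Now I would invoke Parseval/orthonormality of the characters $\{u_T\}$ on $\{0,1\}^{n-1}_p$: since the sets $S \setminus \{i\}$ are distinct as $S$ ranges over subsets containing $i$, the displayed expansion gives
\[
\mathbb{E}_{\mu_p^{(n-1)}}\bigl[(f_{i\mapsto 1}-f_{i\mapsto 0})^2\bigr] = \frac{1}{p(1-p)} \sum_{S \ni i} \hat f(S)^2.
\]
Combining this with the first step yields $\Inf_i^p(f) = \frac{1}{4p(1-p)}\sum_{S \ni i} \hat f(S)^2$, after which summing over $i \in \{1,\dots,n\}$ and swapping the order of summation gives each set $S$ a multiplicity of exactly $|S|$, producing~(\ref{Eq:Intro-new1}).

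There is no conceptual obstacle here; the only point requiring care is the correct sign and scaling arising from the biased normalization of $u_S$ when one coordinate is restricted. Once that factor $-1/\sqrt{p(1-p)}$ is computed correctly, everything collapses by orthonormality. In the unbiased case $p=1/2$, the factor reduces to $-2$, the prefactor $\tfrac{1}{4p(1-p)}$ becomes $1$, and one recovers the familiar formula $\Inf_{1/2}(f) = \sum_S |S|\,\hat f(S)^2$ as a sanity check.
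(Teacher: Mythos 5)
Your proof is correct, and it is the standard argument: express $\Inf_i^p(f)$ as $\tfrac14\mathbb{E}[(f(x)-f(x\oplus e_i))^2]$ using Booleanity, compute the discrete derivative on the characters to get the factor $-1/\sqrt{p(1-p)}$, apply Parseval, and sum over $i$. The paper itself offers no proof of this observation (it simply cites Kahn--Kalai--Linial), so there is nothing to compare against; your derivation, including the normalization check at $p=1/2$, is complete and accurate.
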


Thus, in terms of the distribution induced by the Fourier
coefficients, the total influence is (up to normalization) the
\emph{expectation of the level} of the coefficients, and it
measures the question whether the coefficients are concentrated on
``high'' levels.

\medskip

The entropy/influence conjecture asserts the following:
\begin{conjecture}[Friedgut and Kalai]
Consider the discrete cube $\{0,1\}^n$ endowed with the uniform
measure $\mu_{1/2}$. There exists a universal constant $c$, such
that for any $n$ and for any Boolean function $f:\{0,1\}^n_{1/2}
\rightarrow \{-1,1\}$,
\[
\Ent_{1/2}(f) \leq c \cdot \Inf_{1/2}(f).
\]
\label{Conj:Original}
\end{conjecture}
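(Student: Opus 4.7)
The plan is to attack Conjecture \ref{Conj:Original} via a level-by-level decomposition of the Fourier spectrum, combined with hypercontractivity to bound each level's contribution to the spectral entropy. Writing
\[
\Ent_{1/2}(f) \;=\; \sum_{k=0}^{n} E_k, \qquad E_k \;:=\; \sum_{|S|=k} \hat f(S)^2 \log\frac{1}{\hat f(S)^2},
\]
and recalling $\Inf_{1/2}(f) = \sum_k k\,W_k$ with $W_k := \sum_{|S|=k} \hat f(S)^2$, the conjecture reduces to showing $\sum_k E_k \leq c \sum_k k\, W_k$. I would first bound $E_k$ in terms of $W_k$ and $k$, and then sum.

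The analytic input I would use is the Bonami--Beckner hypercontractive inequality applied to the degree-$k$ part $f_{=k}$, which gives $\|f_{=k}\|_4 \leq \sqrt{3}^{\,k}\|f_{=k}\|_2$, equivalently
\[
\sum_{|S|=k} \hat f(S)^4 \;\leq\; 9^k\, W_k^2.
\]
This forbids the level-$k$ Fourier mass from being uniformly spread: the effective support of the probability distribution $\{\hat f(S)^2/W_k\}_{|S|=k}$ has size only $\sim 9^k$. By standard entropy-maximization under such a second-moment constraint, this yields
\[
E_k \;\leq\; W_k\bigl(c_1 k + \log(1/W_k)\bigr)
\]
for an absolute constant $c_1$. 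Summing over $k$ produces
\[
\Ent_{1/2}(f) \;\leq\; c_1\,\Inf_{1/2}(f) \;+\; \sum_k W_k \log\frac{1}{W_k},
\]
so the problem reduces to bounding the second term --- the entropy of the \emph{level distribution} $\{W_k\}_{k=0}^n$ --- by $O(\Inf_{1/2}(f))$.

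This last reduction is precisely where I expect the hard part to lie. Since $\{W_k\}$ is a probability distribution on $\{0,1,\ldots,n\}$ with mean exactly $\Inf_{1/2}(f)$, what is needed is a Poissonian-type statement: its entropy is at most a constant times its mean. A generic distribution with mean $r$ can have entropy of order $\log n$, not $O(r)$, so the bound must exploit the Booleanness of $f$ in an essential way that level-by-level hypercontractivity alone does not capture. A natural route would be to iterate the argument with the noise operator $T_\rho f$ for varying $\rho$, extracting joint control of the entire profile $\{W_k\}$ rather than each $W_k$ individually; another would be to push the log-Sobolev inequality for the discrete cube into the spectral side directly. Either way, bridging this gap is, in my view, the crux of the conjecture, and is the point at which every known approach --- including the partial result established in the remainder of this note --- has so far broken down.
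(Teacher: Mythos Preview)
The statement you are attempting is an open \emph{conjecture}; the paper does not prove it, and no proof is known. So the relevant question is whether your strategy could plausibly succeed, and where it breaks.

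You present the argument as: (A) hypercontractivity gives $E_k \leq W_k\bigl(c_1 k + \log(1/W_k)\bigr)$, and (B) the remaining task is to bound the level-entropy $\sum_k W_k\log(1/W_k)$ by $O(\Inf_{1/2}(f))$. You flag (B) as the hard step. In fact (A) is already false, and the hypercontractivity step does not do what you claim.

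Concretely, take $f = \mathrm{Maj}_n$. At level $k=1$ every singleton coefficient equals $c/\sqrt{n}$, so the normalized level-$1$ distribution $p_S = \hat f(S)^2/W_1$ is \emph{uniform} on $n$ atoms and has Shannon entropy $\log n$, not $O(1)$. Hence $E_1 = W_1\bigl(\log n + O(1)\bigr)$ while $W_1\bigl(c_1\cdot 1 + \log(1/W_1)\bigr) = O(1)$, so the inequality in (A) fails by an unbounded factor. The bound $\|f_{=1}\|_4 \leq \sqrt{3}\,\|f_{=1}\|_2$ is of course satisfied here; it simply does not translate into the conclusion you draw.

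The slip is in the passage from the $(2,4)$ bound to an entropy bound. Hypercontractivity controls $\|f_{=k}\|_4^4 = \mathbb{E}[f_{=k}^4]$, which is \emph{not} $\sum_{|S|=k}\hat f(S)^4$; for Majority the former is $\Theta(1)$ while the latter is $\Theta(1/n)$. Even granting your displayed inequality $\sum_{|S|=k}\hat f(S)^4 \leq 9^k W_k^2$, this is an \emph{upper} bound on the collision probability $\sum_S p_S^2$, which is vacuous (it always lies in $[0,1]$) and in any case cannot cap Shannon entropy from above: a spread-out distribution has \emph{small} collision probability and \emph{large} entropy. A lower bound $\sum_S p_S^2 \geq 9^{-k}$ would cap only the R\'enyi-$2$ entropy, and Shannon entropy can exceed that arbitrarily---again Majority at level $1$ has $\sum_S p_S^2 = 1/n$.

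So the decomposition $\Ent_{1/2}(f) \leq c_1\Inf_{1/2}(f) + \sum_k W_k\log(1/W_k)$ is not established by your argument, and the first step is not the ``easy'' half: showing that each level's internal entropy is $O(k)$ is itself at least as hard as the full conjecture.
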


The conjecture, if confirmed, has numerous significant
implications. For example, it would imply that for any property of
graphs on $n$ vertices, the sum of influences is at least $c (\log
n)^2$ (which is tight for the property of containing a clique of
size $\approx \log n$). The best currently known lower bound, by
Bourgain and Kalai~\cite{Bourgain-Kalai1}, is $\Omega((\log
n)^{2-\epsilon})$, for any $\epsilon>0$.

Another consequence of the conjecture would be an affirmative
answer to a variant of a conjecture of Mansour~\cite{Mansour} stating that if a
Boolean function can be represented by a DNF formula of polynomial
size in $n$ (the number of coordinates), then most of its Fourier
weight is concentrated on a polynomial number of coefficients (see~\cite{Ryan}
for a detailed explanation of this application). This conjecture,
raised in 1995, is still wide open.

\medskip

In this note we explore the entropy/influence conjecture in two directions:

\medskip

\noindent \textbf{Biased measure on the discrete cube.} We state a
generalization of the conjecture to the product measure $\mu_p$ on
the discrete cube:
\begin{conjecture}\label{Conj:Biased}
There exists a universal constant $c$, such that for any $0<p<1$,
for any $n$ and for any Boolean function $f:\{0,1\}^n_p
\rightarrow \{-1,1\}$,
\[
\Ent_p(f) \leq c p \log(1/p) \cdot \Inf_p(f).
\]
\end{conjecture}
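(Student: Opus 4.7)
Since Conjecture~\ref{Conj:Biased} specializes at $p=1/2$ to the still-open Conjecture~\ref{Conj:Original}, any plan here is necessarily speculative. The natural strategy is to combine a reduction to the uniform case with a direct analytic argument exploiting the logarithmic Sobolev constant of $\mu_p$, which is $\Theta(1/\log(1/p))$ and matches the conjectured factor up to the $1/(4p(1-p))$ normalization in \eqref{Eq:Intro-new1}.

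\emph{Sharpness check.} I would first verify the scaling $p\log(1/p)$ on extremal examples---dictators, $\mathrm{AND}$/$\mathrm{OR}$, tribes, majority---under $\mu_p$ with $p\to 0$. By \eqref{Eq:Intro-new1} one has $\sum_S |S|\hat f(S)^2 = 4p(1-p)\Inf_p(f)$, so for small $p$ the weighted level of the Fourier distribution is of order $p\cdot\Inf_p(f)$. On functions whose biased spectrum is spread over levels up to $\Theta(\log(1/p))$, as happens for small-bias $\mathrm{AND}$/$\mathrm{OR}$, the entropy is boosted by an extra factor of $\log(1/p)$, pinpointing both the source of the conjectured factor and the structures that a proof must handle.

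\emph{Reduction to $p=1/2$.} Given $f:\{0,1\}^n_p\to\{-1,1\}$, replace each biased bit by a Boolean function of $\approx 1/p$ uniform bits with bias $p$ (a block-$\mathrm{OR}$, or a suitable threshold), producing $\tilde f:\{0,1\}^N_{1/2}\to\{-1,1\}$ with $N\approx n/p$. The influence of one biased coordinate becomes the sum of influences over its block, so one expects $\Inf_{1/2}(\tilde f) \lesssim \Inf_p(f)$ up to a controlled $p$-dependent factor. The biased Fourier coefficients of $f$ correspond, via the basis $\{u_S\}$, to bundles of uniform Fourier coefficients of $\tilde f$; tracking how entropy distributes within and across bundles, one hopes to deduce an inequality of the form $\Ent_p(f) \lesssim \Ent_{1/2}(\tilde f) + C\log(1/p)\cdot\Inf_p(f)$, which combined with the assumed Conjecture~\ref{Conj:Original} applied to $\tilde f$ would yield the statement.

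\emph{Main obstacle.} The central difficulty is Conjecture~\ref{Conj:Original} itself. Even granting it, the encoding in the reduction must preserve the tight entropy/influence relationship without leaking the $p\log(1/p)$ factor, and the most natural encodings seem to lose this in the bookkeeping of how mass inside a bundle redistributes after the uniform expansion. A direct attack via the biased log-Sobolev inequality is philosophically cleaner but runs into the same gap as in the uniform case: hypercontractivity compares $\hat f(S)^2\log(1/\hat f(S)^2)$ to $|S|\hat f(S)^2$ only after inserting an extra factor of $|S|$, and closing this log-gap---in either the uniform or the biased setting---appears to require a genuinely new structural idea, perhaps a decomposition of $f$ according to where its Fourier mass concentrates, combined with a refined semigroup/entropy-dissipation argument on $(\{0,1\}^n,\mu_p)$.
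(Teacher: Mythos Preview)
Your reduction-to-$p=1/2$ strategy is exactly what the paper does, but the execution you sketch has two concrete slips that make you overly pessimistic. First, the block size is $m\approx\log(1/p)$ bits per coordinate, not $\approx 1/p$: one writes $p=t/2^m$ and encodes the biased bit by a threshold on the $m$ uniform bits (the BKKKL reduction), so a block-OR on $1/p$ bits is the wrong picture. Second, and more importantly, the entropy comparison is not an approximate inequality with a stray additive $C\log(1/p)\cdot\Inf_p(f)$ term; it is exact and lossless. The key structural fact is that for each $S'\subset\{1,\dots,n\}$ one has $\sum_{S\in V(S')}\hat g(S)^2=\hat f(S')^2$, where $V(S')$ is the bundle of $S\subset\{1,\dots,mn\}$ whose ``support pattern'' over blocks equals $S'$. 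Since entropy only increases when a point mass $\hat f(S')^2$ is split into the pieces $\{\hat g(S)^2\}_{S\in V(S')}$, one gets $\Ent_p(f)\le\Ent_{1/2}(g)$ with no error.

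The factor $p\log(1/p)$ then does not leak out of any bookkeeping: it appears in a single clean place, the Friedgut--Kalai influence bound $\Inf_{1/2}(g)\le 6p\lfloor\log(1/p)\rfloor\,\Inf_p(f)$ for this specific reduction. Chaining $\Ent_p(f)\le\Ent_{1/2}(g)\le c\,\Inf_{1/2}(g)\le 6c\,p\log(1/p)\,\Inf_p(f)$ gives the result, conditional on Conjecture~\ref{Conj:Original}. So the only genuine obstacle is the one you name---the uniform conjecture itself---but the reduction step is tight and requires no new idea beyond the bundling lemma above. Your log-Sobolev remark and your sharpness heuristics are reasonable side comments but are not used; the paper checks tightness on the graph property of containing a $K_r$ at criticality rather than on dictators or tribes.
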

We prove that Conjecture~\ref{Conj:Biased} follows from the
original Entropy/Influence conjecture, and that it is tight for
the graph property of containing a clique of fixed size (at the
critical probability). This answers a question raised by
Kalai~\cite{Kalai-Blog}.

\medskip

\noindent \textbf{Functions with a low Fourier weight on the ``high''
levels.} We consider a weaker version of the conjecture stating
that if ``almost all'' the Fourier weight of a function is
concentrated on the lowest $k$ levels, then its entropy is at most
$c \cdot k$. We prove this statement in an extreme case:
\begin{proposition}\label{Prop:Intro1}
Let $f:\{0,1\}^n_{1/2} \rightarrow \{-1,1\}$ be a Boolean function
such that all the Fourier weight of $f$ is concentrated on the
first $k$ levels. Then all the Fourier coefficients of $f$ are of
the form
\[
\hat f(S) = a(S) \cdot 2^{-k}
\]
where $a(S) \in \mathbb{Z}$. In particular, $\Ent_{1/2}(f) \leq
2k$.
\end{proposition}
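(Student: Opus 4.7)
The plan is to show first the integrality statement $\hat f(S) \in 2^{-k}\mathbb{Z}$, from which the entropy bound follows almost immediately via Parseval. The key observation is that any $\{-1,1\}$-valued function on $\{0,1\}^n$ has \emph{integer} coefficients when written in the standard $\{0,1\}$-monomial basis, and the affine change of variables to the Fourier-Walsh basis introduces denominators at most $2^{|T|}$ per monomial.

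Concretely, I would first expand $f$ as $f(x) = \sum_{T \subseteq [n]} c_T \prod_{i \in T} x_i$, with the coordinates $x_i \in \{0,1\}$ viewed as real variables. Möbius inversion over the subset lattice yields $c_T = \sum_{T' \subseteq T} (-1)^{|T \setminus T'|} f(T')$, so $c_T \in \mathbb{Z}$ because $f$ is $\pm 1$-valued. The multilinear degree of $f$ is the same whether measured in the $x_i$ or in the Fourier-Walsh variables $u_{\{i\}} = 1 - 2x_i$ (the change of variables between the two bases is affine), so the hypothesis that all the Fourier weight is concentrated on the first $k$ levels is equivalent to $c_T = 0$ for $|T| > k$. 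Substituting $\prod_{i \in T} x_i = 2^{-|T|} \prod_{i \in T}(1 - u_{\{i\}}) = 2^{-|T|} \sum_{S \subseteq T} (-1)^{|S|} u_S$ and collecting the coefficient of each $u_S$ then gives
\[
\hat f(S) = (-1)^{|S|} \sum_{T \supseteq S,\, |T| \le k} c_T \, 2^{-|T|},
\]
and multiplying by $2^k$ clears every denominator, so $a(S) := 2^k \hat f(S) \in \mathbb{Z}$, which is the first part of the proposition.

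For the entropy bound I would use that every nonzero Fourier coefficient now satisfies $\hat f(S)^2 \ge 4^{-k}$, hence $\log(1/\hat f(S)^2) \le 2k$; combined with Parseval's identity $\sum_S \hat f(S)^2 = 1$, this immediately yields $\Ent_{1/2}(f) \le 2k$. I do not foresee any serious obstacle in this approach; the only point worth verifying carefully is that the multilinear degrees in the two bases coincide, which is automatic from the linearity of the change of coordinates $x_i \leftrightarrow u_{\{i\}}$.
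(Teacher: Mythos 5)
Your proof is correct, but it takes a genuinely different route from the paper. The paper argues by induction on the degree $k$: it forms the discrete derivatives $f^{i}$, notes that $2f^{i}$ is integer-valued and has degree at most $k-1$, applies the induction hypothesis to conclude $\hat f(S\cup\{i\}) = \hat f^{i}(S) \in 2^{-k}\mathbb{Z}$ for all nonempty $S\cup\{i\}$, and handles $\hat f(\emptyset)$ separately by integrality of $f$. You instead give a direct, non-inductive computation: Möbius inversion shows the coefficients $c_T$ in the $\{0,1\}$-monomial basis are integers, degree is preserved under the affine change of basis $x_i = (1-u_{\{i\}})/2$, and expanding $\prod_{i\in T}x_i$ in the $u_S$ yields the explicit formula $\hat f(S) = (-1)^{|S|}\sum_{T\supseteq S,\,|T|\le k} c_T 2^{-|T|}$, which visibly lies in $2^{-k}\mathbb{Z}$. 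Both arguments hinge only on the integrality of $f$'s values (and indeed both prove the statement for integer-valued $f$, as the paper's Section~3 version requires for its induction to close). Your approach buys an explicit closed form for $a(S)$ and avoids induction entirely; the paper's derivative-based approach is the one that foreshadows and connects to the hypercontractive argument behind the Bourgain--Kalai theorem cited immediately afterwards. The final step (nonzero coefficients have $\hat f(S)^2 \ge 4^{-k}$, so $\log(1/\hat f(S)^2)\le 2k$ and Parseval gives $\Ent_{1/2}(f)\le 2k$) is the same in both.
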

Then we cite a stronger unpublished result of Bourgain and
Kalai~\cite{Bourgain-Kalai2} which shows that if the Fourier
weight beyond the $k$th level decays exponentially, then the
spectral entropy is bounded from above by $c \cdot k$.

Finally, we suggest that if one could generalize the result of
Bourgain and Kalai to {\it some} slower rate of decay, this would
lead to a proof of the entire entropy/influence conjecture, using a
{\it tensorisation} technique.





This note is organized as follows. In Section~\ref{sec:biased} we
consider the generalization of the entropy/influence conjecture to
the biased measure on the discrete cube. Functions with a low
Fourier weight on the high levels are discussed in
Section~\ref{sec:high-levels}. We conclude the paper with an easy proof
of a weaker upper bound on the entropy, and with a connection between
the entropy/influence conjecture and Friedgut's characterization
of functions with a low total influence~\cite{Friedgut1} in
Section~\ref{sec:remarks}.

\section{Entropy/Influence Conjecture for the Product Measure $\mu_p$ on the
Discrete Cube} \label{sec:biased}

In this section we consider the space $\{0,1\}^n_p$, for $0<p<1$.
First we formulate a variant of the entropy/influence conjecture
for the biased measure and prove that it follows from the original
conjecture. Then we show that it is tight for the graph property
of containing a copy of a complete graph $K_r$ as an induced
subgraph, for random graphs distributed according to the model
$G(n,p)$, at the critical probability $p_c$.
\begin{proposition}\label{Prop:Biased}
Assume that the entropy/influence conjecture holds. Then there
exists a universal constant $c$ such that for any $0<p<1$, for any
$n$ and for any $f:\{0,1\}^n_p \rightarrow \{-1,1\}$, we have
\[
\Ent_p(f) \leq c p \log(1/p) \cdot \Inf_p(f).
\]
\end{proposition}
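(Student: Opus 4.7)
The plan is to reduce to the original conjecture via a ``blow-up'' construction that encodes each $\mu_p$-biased coordinate by $k := \lceil \log_2(1/p) \rceil$ independent uniform bits. Assume first for simplicity that $p = 2^{-k}$ for an integer $k$, and given $f \colon \{0,1\}^n_p \to \{-1,1\}$ define $F \colon \{0,1\}^{nk}_{1/2} \to \{-1,1\}$ by
\[
F(y) := f(\phi(y)), \qquad \phi_i(y) := y_{i,1} \wedge \cdots \wedge y_{i,k}.
\]
Since $\phi_i$ takes value $1$ with probability $2^{-k} = p$ independently across $i$, the pushforward $\phi_* \mu_{1/2}^{nk}$ is exactly $\mu_p^n$, and $F$ is a Boolean function on the uniform cube to which the original conjecture can be applied. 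The case of general $p$ is handled by replacing $\mathrm{AND}$ by some Boolean function of $k$ uniform bits whose output has probability exactly $p$ (taking $k$ large enough that $p \cdot 2^k$ is integral, or approximating), at the cost of a constant factor loss.

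The influence transfer is a short calculation: flipping $y_{i,j}$ changes $\phi_i(y)$ only when all the other $k-1$ bits in block $i$ equal $1$, an event of probability $2^{-(k-1)} = 2p$; on that event the remaining blocks are distributed as $\mu_p^{n-1}$. Hence $\Inf^{1/2}_{(i,j)}(F) = 2p \cdot \Inf^p_i(f)$, and summing gives
\[
\Inf_{1/2}(F) = 2 p \log_2(1/p) \cdot \Inf_p(f).
\]
The key step is the entropy comparison. Expanding the biased single-coordinate character $u^p$ composed with $\mathrm{AND}$ in the uniform Fourier basis $\{v_R\}_{R \subseteq [k]}$ on $\{0,1\}^k$, starting from the identity $\mathbb{1}[\mathrm{AND}(y)=1] = 2^{-k} \sum_R (-1)^{|R|} v_R(y)$, one finds after a short calculation
\[
u^p(\mathrm{AND}(y)) = \sqrt{p/(1-p)} \sum_{\emptyset \neq R \subseteq [k]} (-1)^{|R|+1} v_R(y),
\]
and crucially the empty-set coefficient vanishes. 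Tensorising over the $n$ blocks then yields $\hat F(T) = \pm \hat f(S) \cdot (p/(1-p))^{|S|/2}$ where $S := \{i : T \cap (\{i\} \times [k]) \neq \emptyset\}$; for each $S$ there are exactly $(2^k - 1)^{|S|}$ such $T$, and the identity $(2^k - 1) \cdot p/(1-p) = 1$ preserves Parseval and, after grouping the entropy sum by $S$, produces the clean additive identity
\[
\Ent_{1/2}(F) = \Ent_p(f) + 4 p (1-p) \log\!\left(\frac{1-p}{p}\right) \cdot \Inf_p(f).
\]

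For $p \leq 1/2$ the correction term is non-negative, so $\Ent_p(f) \leq \Ent_{1/2}(F)$; applying the original conjecture to $F$ now gives $\Ent_p(f) \leq c \cdot \Inf_{1/2}(F) = 2c\,p \log_2(1/p) \cdot \Inf_p(f)$, which is the stated bound after absorbing the factor $2/\log 2$ into $c$ (the case $p > 1/2$ follows by swapping $p \leftrightarrow 1-p$). The main obstacle is the explicit Fourier identity for $u^p \circ \mathrm{AND}$ and the bookkeeping that produces a purely additive (rather than cross-termed) relation between $\Ent_{1/2}(F)$ and $\Ent_p(f)$: the vanishing of the $R = \emptyset$ coefficient is exactly what prevents spurious interactions between blocks. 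A secondary annoyance is that for non-dyadic $p$ the replacement function's Fourier expansion is messier, but a continuity/approximation argument keeps the influence and entropy estimates of the same order and only costs a constant factor in $c$.
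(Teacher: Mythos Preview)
Your proposal is correct and follows essentially the same route as the paper: both reduce to the uniform-measure conjecture by encoding each $\mu_p$-biased coordinate with $\approx \log(1/p)$ uniform bits, then compare influences and entropies of $f$ and the blown-up function. The only cosmetic differences are that the paper uses a general threshold map (so any dyadic $p$ is handled directly) and proves just the inequality $\Ent_p(f)\le \Ent_{1/2}(F)$ from the Parseval grouping $\sum_{T\in V(S)}\hat F(T)^2=\hat f(S)^2$, whereas you specialise to $p=2^{-k}$ with $\mathrm{AND}$ and carry the Fourier computation further to obtain the exact additive identity $\Ent_{1/2}(F)=\Ent_p(f)+4p(1-p)\log\frac{1-p}{p}\,\Inf_p(f)$, which is a nice bonus but more than is needed.
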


Our proof is based on a standard reduction from the biased measure
$\mu_p$ to the uniform measure $\mu_{1/2}$ first considered
in~\cite{BKKKL}. Let $p \leq 1/2$, and assume that
$p=t/2^m$.\footnote{It is clear that there is no loss of
generality in assuming that $p$ is diadic, as the results for
general $p$ follow immediately by approximation.} For any function
$f:\{0,1\}^n \rightarrow \mathbb{R}$ we define a function
$Red(f)=g:\{0,1\}^{mn} \rightarrow \mathbb{R}$ as follows: each $y
\in \{0,1\}^{mn}$ is considered as a concatenation of $n$ vectors
$y^i \in \{0,1\}^m$, and each such vector is translated to a
natural number $0 \leq Bin(y^i) < 2^m$ through its binary
expansion (i.e., $Bin(y^i)=\sum_{j=0}^{m-1} 2^j \cdot y^i_{m-j}$).
Then, for any $y \in \{0,1\}^{mn}$,
\[
g(y)=g(y^1,y^2,\ldots,y^n):=f \left(h(y^1),h(y^2),\ldots,h(y^n)
\right),
\]
where $h:\{0,1\}^m \rightarrow \{0,1\}$ is given by
\[
h(y^i) = \left\lbrace
  \begin{array}{c l}
    1, & Bin(y^i) \geq 2^m-t \\
    0, & Bin(y^i) < 2^m-t.
  \end{array}
\right.
\]
We use two simple properties of the reduction. The first, proved
by Friedgut and Kalai~\cite{Friedgut-Kalai}, relates the total
influence of $g$ (w.r.t. $\mu_{1/2}$) to that of $f$ (w.r.t. to
$\mu_p$).
\begin{lemma}[Friedgut and Kalai]
Let $f:\{0,1\}^n_p \rightarrow \{-1,1\}$, and let $g=Red(f)$. Then
\begin{equation}\label{Eq:Red-FK}
\Inf_{1/2}(g) \leq 6 p \lfloor \log(1/p) \rfloor \Inf_p(f).
\end{equation}
\end{lemma}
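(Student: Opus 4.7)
The plan is to first relate the coordinate-wise influences of $g$ to those of $f$ through an amplification factor coming from the reduction $h$, and then bound the cumulative amplification.

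For a coordinate $(i,k) \in \{1,\ldots,n\} \times \{1,\ldots,m\}$ of $g$, since $g(y) = f(h(y^1),\ldots,h(y^n))$ and flipping the $(i,k)$-bit of $y$ affects $g$ only through $h(y^i)$, I would argue as follows. The event $\{g(y) \neq g(y \oplus e_{(i,k)})\}$ requires $h(y^i) \neq h(y^i \oplus e_k)$; conditional on this event the pair $(h(y^i), h(y^i \oplus e_k))$ equals $\{0,1\}$, while $(h(y^j))_{j \neq i}$ is distributed as $\mu_p^{n-1}$ independently of $y^i$. Hence
\[
\Inf_{(i,k)}^{1/2}(g) = p_k \cdot \Inf_i^p(f), \qquad p_k := \Pr_{y \sim \mu_{1/2}^m}[h(y) \neq h(y \oplus e_k)].
\]
Summing over $i$ and $k$ gives $\Inf_{1/2}(g) = \Inf_{1/2}(h) \cdot \Inf_p(f)$, so it suffices to prove $\Inf_{1/2}(h) \leq 6p\lfloor \log(1/p) \rfloor$.

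Next, I would observe that $h$ is the indicator of the set $T_\geq = \{y \in \{0,1\}^m : Bin(y) \geq 2^m - t\}$, and that $T_\geq$ is the image of the initial segment $A = \{0,1,\ldots,t-1\}$ under bit-complementation, which is a hypercube isometry. Therefore $\Inf_{1/2}(h) = 2|\partial A|/2^m$, where $|\partial A|$ denotes the edge boundary of $A$ in $\{0,1\}^m$. To bound this, I would use the binary expansion $t = 2^{a_1} + 2^{a_2} + \cdots + 2^{a_s}$ with $a_1 > a_2 > \cdots > a_s \geq 0$: this induces a partition of $A$ into subcube-translates $B_1,\ldots,B_s$, where $B_i$ has dimension $a_i$, and the edge boundary of each $B_i$ in $\{0,1\}^m$ has size $2^{a_i}(m - a_i)$, so
\[
|\partial A| \leq \sum_{i=1}^s 2^{a_i}(m - a_i).
\]

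Finally, I would estimate this sum. Writing $m - a_i = (m - a_1) + (a_1 - a_i)$ and setting $c_i = a_1 - a_i$, the right-hand side becomes $(m - a_1) \cdot t + 2^{a_1} \sum_i c_i 2^{-c_i}$. Since $a_1 = \lfloor \log_2 t \rfloor$, we have $m - a_1 \leq \lfloor \log(1/p) \rfloor + 1$; and since the $c_i$ are distinct non-negative integers, $\sum_i c_i 2^{-c_i} \leq \sum_{c=0}^{\infty} c \cdot 2^{-c} = 2$, whence the second term is at most $2 \cdot 2^{a_1} \leq 2t$. Multiplying by $2/2^m$ yields $\Inf_{1/2}(h) \leq 2p(\lfloor \log(1/p) \rfloor + 3)$, which is below $6p\lfloor \log(1/p) \rfloor$ for $\lfloor \log(1/p) \rfloor$ large, with the few small cases verified directly. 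The main technical step is the edge-boundary estimate on $A$; the qualitative bound $|\partial A| = O(t \log(2^m/t))$ is immediate from the subcube decomposition, but obtaining the form $p\log(1/p)$ with a concrete constant requires the careful split above.
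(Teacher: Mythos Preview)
The paper does not give its own proof of this lemma; it is quoted as a black box from Friedgut and Kalai~\cite{Friedgut-Kalai}. Your argument is essentially the standard one and is correct in its structure: the factorization $\Inf_{1/2}(g)=\Inf_{1/2}(h)\cdot\Inf_p(f)$ holds exactly (the event that coordinate $i$ is pivotal for $f$ does not depend on $x_i$, so it is independent of the event $h(y^i)\neq h(y^i\oplus e_k)$), and the binary decomposition of the initial segment $A=\{0,\dots,t-1\}$ into subcubes is the natural way to bound $|\partial A|$.

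There is one small loose end. Your final numerical step, $2p(\lfloor\log(1/p)\rfloor+3)\le 6p\lfloor\log(1/p)\rfloor$, is equivalent to $\lfloor\log(1/p)\rfloor\ge 2$ and therefore fails throughout the range $p\in(1/4,1/2)$. This is not ``a few small cases'': there are infinitely many dyadic $p$ in that interval, so a direct verification is not available as stated. The gap is purely cosmetic for the paper's purposes, since only the estimate $\Inf_{1/2}(h)=O(p\log(1/p))$ is used downstream (in the proof of Proposition~\ref{Prop:Biased}) and your argument establishes that cleanly. If you want the stated constant $6$ on the whole range, you need a slightly sharper treatment of the case $a_1=m-2$ rather than the crude bound $\sum_i c_i 2^{-c_i}\le 2$; alternatively, replacing $\lfloor\log(1/p)\rfloor$ by $\lceil\log(1/p)\rceil$ in the conclusion makes your inequality valid for all $p\le 1/2$.
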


The second property relates the Fourier coefficients of $f$
(w.r.t. $\mu_{p}$) to corresponding coefficients of $g$ (w.r.t.
$\mu_{1/2}$).
\begin{lemma}
Let $f:\{0,1\}^n_p \rightarrow \mathbb{R}$, and let $g=Red(f)$.
For any $S \subset \{1,2,\ldots,mn\}$, denote $S_i=S \cap
\{(i-1)m+1,(i-1)m+2,\ldots,im\}$, and for $S' \subset
\{1,2,\ldots,n\}$, let
\[
V(S')=\{S \subset \{1,2,\ldots,mn\} : \{i: |S_i|>0\} = S'\}.
\]
Then:
\begin{equation}\label{Eq:Red0}
\sum_{S \in V(S')} \hat g(S)^2 = \hat f(S')^2.
\end{equation}
\end{lemma}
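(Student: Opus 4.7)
The plan is to compute the $\mu_{1/2}$-Fourier expansion of $g$ explicitly, in terms of the $\mu_p$-Fourier expansion of $f$. The key point is that $g$ factors as $g(y) = f(h(y^1),\ldots,h(y^n))$, and by the choice of $t$ the pushforward of $\mu_{1/2}^{\otimes m}$ under $h$ is precisely the Bernoulli$(p)$ measure on $\{0,1\}$. Hence $h \circ \cdot$ pulls $\mu_p$-orthonormal quantities back to $\mu_{1/2}$-orthonormal quantities, and this is what will let me identify $\hat g(S)$ cleanly.

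Concretely, for each $i \in \{1,\ldots,n\}$ I would introduce the auxiliary function $v_i : \{0,1\}^m \to \mathbb{R}$ defined by
\[
v_i(y^i) \;=\; u^{(p)}_{\{i\}}\bigl(h(y^1),\ldots,h(y^n)\bigr),
\]
which depends only on the $i$-th block of coordinates. Since $\mathbb{E}_{\mu_p}[u^{(p)}_{\{i\}}] = 0$ and $\mathbb{E}_{\mu_p}[(u^{(p)}_{\{i\}})^2] = 1$, the pushforward property gives $\mathbb{E}_{\mu_{1/2}}[v_i]=0$ and $\mathbb{E}_{\mu_{1/2}}[v_i^2]=1$. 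Writing the $\mu_{1/2}$-Fourier expansion of $v_i$ as $v_i = \sum_{T \subset \{1,\ldots,m\}} \hat v_i(T) u^{(1/2)}_T$, these two facts translate into $\hat v_i(\emptyset)=0$ and $\sum_{T \neq \emptyset} \hat v_i(T)^2 = 1$, which are the only numerical inputs that will be needed.

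Next I would use multiplicativity of the $\mu_p$-characters to write, for every $S' \subset \{1,\ldots,n\}$,
\[
u^{(p)}_{S'}\bigl(h(y^1),\ldots,h(y^n)\bigr) \;=\; \prod_{i \in S'} v_i(y^i).
\]
Expanding each $v_i$ in the $\mu_{1/2}$-basis on its own block and taking $T_i = \emptyset$ for $i \notin S'$, the product of characters on disjoint blocks is itself a character $u^{(1/2)}_S$ where $S$ is obtained by placing each $T_i$ in the $i$-th block. Since $\hat v_i(\emptyset)=0$, only the index sets $S$ with $\{i : |S_i|>0\} = S'$ survive, so the expression above is a sum over exactly $V(S')$. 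Substituting this into $f = \sum_{S'} \hat f(S') u^{(p)}_{S'}$ and comparing with the $\mu_{1/2}$-Fourier expansion of $g$ yields the block-factorised identity
\[
\hat g(S) \;=\; \hat f(S') \prod_{i \in S'} \hat v_i(S_i), \qquad S' = \{i : |S_i| > 0\}.
\]

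Finally, summing over $S \in V(S')$ and using $\sum_{T \neq \emptyset} \hat v_i(T)^2 = 1$ block by block gives
\[
\sum_{S \in V(S')} \hat g(S)^2 \;=\; \hat f(S')^2 \prod_{i \in S'} \Bigl( \sum_{T_i \neq \emptyset} \hat v_i(T_i)^2 \Bigr) \;=\; \hat f(S')^2,
\]
as claimed. There is no real obstacle: the only thing to verify carefully is the pushforward identity $h_\ast \mu_{1/2}^{\otimes m} = \mu_p$, which is immediate from $|\{y^i : Bin(y^i) \geq 2^m - t\}| = t$ and $p = t/2^m$; the rest is a mechanical manipulation of Fourier expansions.
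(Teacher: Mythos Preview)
Your argument is correct. Both your proof and the paper's exploit the same structural fact---that $Red(u^{(p)}_{S'})$ has $\mu_{1/2}$-Fourier support contained in $V(S')$---but they finish differently. The paper cites an external result (Proposition~2.2 of~\cite{Reduction}) to assert that $\widehat{Red(f)}(S) = c(S,p)\,\hat f(S')$ for some $c(S,p)$ independent of $f$, and then, rather than identifying $c(S,p)$ or summing its squares, applies Parseval together with the fact that $Red$ preserves $L_2$-norms to the single-term function $f_{S'}=\hat f(S')u_{S'}$. You instead compute $c(S,p)=\prod_{i\in S'}\hat v(S_i)$ explicitly and verify $\sum_{T\neq\emptyset}\hat v(T)^2=1$ block by block. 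Your route is entirely self-contained and makes the mechanism transparent; the paper's route is shorter once the cited proposition is granted, and avoids the explicit product formula by packaging everything into a single Parseval identity.
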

\begin{proof}
For each $S' \subset \{1,2,\ldots,n\}$, let $f_{S'}:\{0,1\}^n_p
\rightarrow \mathbb{R}$ be defined by $f_{S'}=\hat f(S') u_{S'}$.
We claim that
\begin{equation}\label{Eq:Red1}
Red(f_{S'}) = \sum_{S \in V(S')} \hat g(S) u_S.
\end{equation}
This claim implies the assertion, as by the Parseval identity,
Equation~(\ref{Eq:Red1}) implies:
\[
\sum_{S \in V(S')} \hat g(S)^2 = ||Red(f_{S'})||_2^2 =
||f_{S'}||_2^2 = \hat f(S')^2.
\]
(The first and third equalities use the Parseval identity, and the
middle equality holds since by the structure of the reduction, it
preserves all $L_p$ norms.)

\medskip

\noindent In order to prove Equation~(\ref{Eq:Red1}), we use
Proposition~2.2 in~\cite{Reduction} that describes the exact
relation between the Fourier coefficients of $Red(f)$ and the
corresponding coefficients of $f$. By the proposition, for all $S
\in V(S')$,
\[
\widehat{Red(f)}(S) = c(S,p) \cdot \hat f(S'),
\]
where $c(S,p)$ depends on $S$ and $p$ but not on $f$. Hence, for
all $S \in V(S')$, we have $\widehat{Red(f_{S'})}(S) =
\widehat{Red(f)}(S)$ (since both are determined by $S,p$, and
$\hat f(S')$). Similarly, for all $S \not \in V(S')$,
$\widehat{Red(f_{S'})}(S)=0$, since $\widehat{f_{S'}}(S'') = 0$
for all $S'' \neq S'$. Therefore, the Fourier expansion of
$Red(f_{S'})$ is:
\[
Red(f_{S'}) = \sum_{S \in V(S')} \widehat{Red(f)}(S) u_S,
\]
as asserted.
\end{proof}

\noindent Now we are ready to prove Proposition~\ref{Prop:Biased}.

\medskip

\begin{proof}
Let $f:\{0,1\}^n_p \rightarrow \{-1,1\}$, and let $g=Red(f)$. By
Equation~(\ref{Eq:Red0}),
\begin{align}\label{Eq:Red2}
\begin{split}
\Ent_{1/2}(g) =& \sum_{S \subset \{1,2,\ldots, mn\}} \hat g(S)^2
\log \frac{1}{\hat g(S)^2} = \sum_{S' \subset \{1,2,\ldots,n\}}
\sum_{S
\in V(S')} \hat g(S)^2 \log \frac{1}{\hat g(S)^2} \\
\geq& \sum_{S' \subset \{1,2,\ldots,n\}} \sum_{S \in V(S')} \hat
g(S)^2 \log \frac{1}{\hat f(S')^2} = \sum_{S' \subset
\{1,2,\ldots,n\}} \hat f(S')^2 \log \frac{1}{\hat f(S')^2} =
\Ent_p(f).
\end{split}
\end{align}
Combining Equation~(\ref{Eq:Red2}) with Equation~(\ref{Eq:Red-FK})
and applying the entropy/influence conjecture to $g$, we get:
\[
\Ent_p(f) \leq \Ent_{1/2}(g) \leq c \cdot \Inf_{1/2}(g) \leq c
\cdot 6 p \lfloor \log(1/p) \rfloor \Inf_p(f),
\]
and therefore,
\[
\Ent_p(f) \leq c' p \log(1/p) \Inf_p(f),
\]
as asserted.
\end{proof}

\medskip

Consider the random graph model $G(n,p)$. Recall that in this
model, the probability space is $\{0,1\}^N_{p}$, where
$N={{n}\choose{2}}$, the coordinates correspond to the edges of a
graph on $n$ vertices, and each edge exists in the graph with
probability $p$, independently of the other edges. It is
well-known that for the graph property of containing the complete
graph $K_r$ as an induced subgraph, there exists a threshold at
$p_t=\Theta(n^{-2/(r-1)})$. This means that if $p << n^{-2/(r-1)}$
then $\Pr[K_r \subset G | G \in G(n,p)]$ is close to zero, and if
$p >> n^{-2/(r-1)}$ then $\Pr[K_r \subset G | G \in G(n,p)]$ is
close to one. We choose a value $p_0$ in the critical range,
consider the characteristic function $f$ of this graph property in
$G(n,p_0)$, and show that the assertion of
Proposition~\ref{Prop:Biased} is tight for $f$. In order to
simplify the computation, we choose $p_0$ such that the expected
number of copies of $K_r$ in $G(n,p_0)$ is ``nice''. However, the
same argument holds for any value of $p$ in the critical range.
\begin{proposition}
Let $n,r$ be integers such that $r < \log n$. Consider the random
graph $G(n,p_0)$ where $p_0$ is chosen such that ${{n}\choose{r}}
\cdot p_0^{{r}\choose{2}} =1/2$. Let $f$ be defined by:
\[
f(G)=1 \Leftrightarrow \mbox{ G contains a copy of } K_r \mbox{ as
an induced subgraph},
\]
and $f(G)=0$ otherwise. Then
\[
\Ent_{p_0}(f) \geq c \cdot p_0 \log(1/p_0) \cdot \Inf_{p_0}(f),
\]
where $c$ is a universal constant.
\end{proposition}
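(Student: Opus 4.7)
The plan is to show both sides of the inequality are of order $r \log n$, so their ratio is bounded below by a universal constant. Throughout I use the criticality identity $\binom{n}{r} p_0^{\binom{r}{2}} = 1/2$, which yields $p_0 = \Theta(n^{-2/(r-1)})$ and hence $\log(1/p_0) = \Theta((\log n)/r)$.

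For the total influence, by vertex-transitivity of $G(n,p_0)$ we have $\Inf_{p_0}(f) = \binom{n}{2}\,\Inf_e^{p_0}(f)$ for any single edge $e$, and monotonicity gives $\Inf_e^{p_0}(f) \leq \Pr[e \text{ lies in some } K_r\text{-copy of } G \cup \{e\}]$. A union bound over the $\binom{n-2}{r-2}$ potential cliques through $e$ bounds this by $\binom{n-2}{r-2}\,p_0^{\binom{r}{2}-1} = \Theta(r^2/(n^2 p_0))$ after applying the criticality identity. Summing over edges gives $\Inf_{p_0}(f) = O(r^2/p_0)$, and therefore
\[ p_0\log(1/p_0)\,\Inf_{p_0}(f) \;=\; O\!\left(r^2\log(1/p_0)\right) \;=\; O(r \log n). \]

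To lower bound the entropy, I locate $\binom{n}{r}$ Fourier coefficients of comparable magnitude. For each $V' \in \binom{[n]}{r}$, let $e(V')$ denote the set of $\binom{r}{2}$ edges inside $V'$. Writing $f = 1 - \prod_{V''}(1 - X_{V''})$ with $X_{V''} = \prod_{e \in e(V'')} x_e$ the indicator that all edges of $V''$ are present, one factors out the term with $V'' = V'$ and splits the remaining product into a contribution from $r$-sets disjoint from $V'$ (which is independent of $e(V')$ and factorises cleanly against $u_{e(V')}$) and an overlap correction of probability $O(r^2/n)$. A direct computation gives $E[X_{V'}u_{e(V')}] = (-1)^{\binom{r}{2}}(p_0(1-p_0))^{\binom{r}{2}/2}$, and combining with the Poisson estimate $\Pr[\text{no } K_r \text{ disjoint from } V'] = \Theta(1)$ yields
\[ \bigl|\hat f(e(V'))\bigr|^2 \;=\; \Theta\!\bigl((p_0(1-p_0))^{\binom{r}{2}}\bigr) \;=\; \Theta\!\bigl(1/\binom{n}{r}\bigr). \]
Summing over all $\binom{n}{r}$ such $V'$ and using the assumption $r < \log n$ (so $\log(n/r) = \Theta(\log n)$),
\[ \Ent_{p_0}(f) \;\geq\; \sum_{V'} \hat f(e(V'))^2 \log\!\bigl(1/\hat f(e(V'))^2\bigr) \;=\; \Omega\!\left(\log\binom{n}{r}\right) \;=\; \Omega(r \log n), \]
matching the upper bound on the right-hand side and yielding the proposition.

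The main obstacle is the rigorous Fourier estimate $|\hat f(e(V'))|^2 = \Theta(1/\binom{n}{r})$: one must carefully control the correlation between the edges inside $V'$ and the event ``some $K_r$ exists outside $V'$'' arising from $r$-subsets that share one or more vertices with $V'$. The hypothesis $r < \log n$ (giving $r^2 = o(n)$) is precisely what makes these overlap corrections a genuine lower-order term, so the dominant contribution comes from the clean product structure over $r$-subsets disjoint from $V'$.
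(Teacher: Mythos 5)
Your proposal is correct and follows essentially the same route as the paper: the same union bound gives $\Inf_{p_0}(f)=O(r^2/p_0)$, and the entropy is lower-bounded by showing each of the $\binom{n}{r}$ coefficients indexed by a clique's edge set has squared value $\Theta(1/\binom{n}{r})$, with the main term coming from ``the clique is present'' and the correction controlled by the fact that the expected number of other $K_r$-copies is $1/2+o(1)$ at criticality. The paper organizes the coefficient estimate as a direct sum over graphs $T$ split into $T\supseteq S$ versus $T\not\supseteq S$ (bounding the latter by $\sum_k\binom{n}{r-k}\binom{r}{k}p_0^{\binom{r}{2}-\binom{k}{2}}$), which is just a different presentation of the same overlap bookkeeping you flag as the main obstacle.
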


\begin{proof}
The result is a combination of an upper bound on $\Inf_{p_0}(f)$
with a lower bound on $\Ent_{p_0}(f)$.

In order to bound $\Inf_{p_0}(f)$ from above, note that a
necessary (but not sufficient) condition for an edge $e=(v,w)$ to
be pivotal for $f$ at a graph $G$ \footnote{An edge $e$ is pivotal
for the property $f$ at a graph $G$ if $f(G)=1$ and $f(G \setminus
\{e\})=0$.} is that there exists a set $S$ of $r$ vertices
including $v$ and $w$ such that all ${{r}\choose{2}}$ edges inside
$S$ except for $e$ appear in $G$. Hence, a simple union bound
yields that for any edge $e$,
\[
 \Inf^{p_0}_e(f) \leq {{n-2}\choose{r-2}} \cdot p_0^{{{r}\choose{2}}-1} = \frac{r(r-1)}{n(n-1)p_0}
 \cdot {{n}\choose{r}} p_0^{{{r}\choose{2}}} = \frac{r(r-1)}{2n(n-1)p_0},
 \]
and thus,
\begin{equation}\label{Eq:K_r-1}
\Inf_{p_0}(f) = \sum_{e} \Inf^{p_0}_e(f) \leq \frac{1}{p_0} \cdot
\frac{r(r-1)}{4}.
\end{equation}

In order to bound $\Ent_{p_0}(f)$ from below, we show that at
least a constant portion of the Fourier weight of $f$ is
concentrated on coefficients that correspond to copies of $K_r$ in
$\{0,1\}^N$. Concretely, we show that if $S$ corresponds to a copy
of $K_r$, then:
\begin{equation}\label{Eq:K_r-2}
\hat f(S)^2 \geq c' \cdot {{n}\choose{r}}^{-1}.
\end{equation}
As the number of such coefficients is ${{n}\choose{r}}$, it will
follow that:
\begin{equation}\label{Eq:K_r-3}
\Ent_{p_0}(f) \geq \sum_{\{S: \mbox{ S is a copy of } K_r\}} \hat
f(S)^2 \log \left( \frac{1}{\hat f(S)^2} \right) \geq c' \cdot
\log {{n}\choose{r}} \geq c'' \cdot r \log (n),
\end{equation}
where the rightmost inequality holds since $r<\log n$. Finally, a
combination of Equation~(\ref{Eq:K_r-3}) with
Equation~(\ref{Eq:K_r-1}) will imply:
\[
\Ent_{p_0}(f) \geq c'' \cdot r \log (n) \geq c'' \cdot
\frac{r(r-1)}{2} \cdot \log(1/p_0) \geq c'' \cdot p_0 \log(1/p_0)
\cdot \Inf_{p_0}(f),
\]
as asserted.

To prove Equation~(\ref{Eq:K_r-2}), consider a specific copy $H$
of $K_r$ and denote its set of edges by $S=E(H)$. By the
definition of the Fourier coefficients, we have:
\begin{align}\label{Eq:K_r-4}
\begin{split}
\hat f(S) = &\sum_{T \in \{0,1\}^N} \mu_{p_0}(T) \left(
-\sqrt{\frac{1-p_0}{p_0}} \right)^{|S \cap T|}
\left( \sqrt{\frac{p_0}{1-p_0}} \right)^{{{r}\choose{2}}-|S \cap T|} f(T) \\
= &\sum_{T \in \{0,1\}^N} \mu_{p_0}(T \setminus S) \cdot
(p_0(1-p_0))^{{{r}\choose{2}}/2} (-1)^{|S \cap T|} f(T) \\
= &(p_0(1-p_0))^{{{r}\choose{2}}/2} \sum_{T \in \{0,1\}^N}
\mu_{p_0}(T \setminus S) (-1)^{|S \cap T|} f(T),
\end{split}
\end{align}
where $\mu_{p_0}(T \setminus S)$ denotes the induced measure of
the graph $T \setminus S$. Note that the total contribution to
$\hat f(S)$ of $\{T \in \{0,1\}^N: S \subset T\}$ is
\begin{equation}\label{Eq:K_r-6}
(-1)^{|S|} \cdot (p_0(1-p_0))^{{{r}\choose{2}}/2},
\end{equation}
since $f(T)=1$ for all $T \supset S$. On the other hand, if
$f(T)=1$ and $T \supsetneq S$, then $T$ contains a copy of $K_r$,
in which $k \leq r-1$ vertices are included in $V(H)$, and the
remaining $r-k$ vertices are not included in $V(H)$. Hence, the
total contribution to $\hat f(S)$ of $\{T \in \{0,1\}^N: S
\subsetneq T\}$ is bounded from above (in absolute value) by:
\begin{equation}\label{Eq:K_r-7}
(p_0(1-p_0))^{{{r}\choose{2}}/2} \cdot \sum_{k=0}^{r-1}
{{n}\choose{r-k}} {{r}\choose{k}}
p_0^{{{r}\choose{2}}-{{k}\choose{2}}} =
(p_0(1-p_0))^{{{r}\choose{2}}/2} \cdot (1/2+o_n(1)),
\end{equation}
since for our choice of $p_0$, the term corresponding to $k=0$
equals ${{n}\choose{r}} p_0^{{{r}\choose{2}}} = 1/2$, and the
other terms are negligible. Combining estimates~(\ref{Eq:K_r-6})
and~(\ref{Eq:K_r-7}), we get:
\begin{equation}
\hat f(S)^2 \geq (1-1/2-o_n(1))^2 (p_0(1-p_0))^{{r}\choose{2}}
\geq c p_0^{{r}\choose{2}} = (c/2) \cdot {{n}\choose{r}}^{-1}.
\end{equation}
This completes the proof.
\end{proof}

\medskip

We conclude this section by noting that if $p$ is inverse polynomially small as a
function of $n$, then one can easily prove a statement which is only
slightly weaker than the entropy/influence conjecture. In~\cite{Ryan}
it was shown that with respect to the uniform measure, we have
\[
\Ent_{1/2}(f) \leq (\log n +1) \Inf_{1/2}(f)+1,
\]
for any Boolean $f$.
The statement generalizes easily to a general
biased measure $\mu_p$, and yields the following:
\begin{claim}\label{Claim:Biased}
There exists a universal constant $c$ such that for any $0<p<1$,
for any $n$ and for any $f:\{0,1\}^n_p \rightarrow \{-1,1\}$, we
have
\[
\Ent_p(f) \leq c p(1-p) \log(n) \cdot \Inf_p(f).
\]
\end{claim}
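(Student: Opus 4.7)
The plan is to mimic the argument of O'Donnell~\cite{Ryan} for the uniform case, where only one ingredient has to be re-examined in the biased setting. The decisive observation is that, despite $\|u_S\|_\infty$ possibly being very large when $p$ is small, the Fourier coefficients of a $\pm 1$-valued $f$ remain bounded by $1$ in absolute value. Indeed, Cauchy--Schwarz gives
\[
|\hat f(S)| = \big|\mathbb{E}_{\mu_p}[f\cdot u_S]\big| \leq \|f\|_{L^2(\mu_p)} \cdot \|u_S\|_{L^2(\mu_p)} = 1,
\]
since $f^2 \equiv 1$ and $\{u_S\}$ is orthonormal. Consequently each summand $\hat f(S)^2 \log(1/\hat f(S)^2)$ appearing in $\Ent_p(f)$ is non-negative, which is what allows the entropy to be bounded term by term.

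Next I would split the spectrum by level. Setting $p_k = \sum_{|S|=k}\hat f(S)^2$, Parseval gives $\sum_k p_k = 1$, and Equation~(\ref{Eq:Intro-new1}) gives $\sum_k k\, p_k = 4p(1-p)\Inf_p(f)$. By concavity of $x \mapsto x\log(1/x)$ on $[0,1]$, Jensen's inequality applied to the $\binom{n}{k}$ non-negative values $\hat f(S)^2$ at level $k$ yields
\[
\sum_{|S|=k} \hat f(S)^2 \log \frac{1}{\hat f(S)^2} \leq p_k \log \binom{n}{k} + p_k \log \frac{1}{p_k}.
\]
Summing over $k$, bounding $\log\binom{n}{k} \leq k\log n$, and using that the level distribution $\{p_k\}$ is supported on $\{0,1,\ldots,n\}$ and so has entropy at most $\log(n+1)$, I obtain
\[
\Ent_p(f) \;\leq\; (\log n) \sum_k k\, p_k + \log(n+1) \;=\; 4p(1-p) \log(n) \cdot \Inf_p(f) + \log(n+1).
\]

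The only remaining point is to absorb the additive $\log(n+1)$ term into the multiplicative constant---note that the uniform-measure analogue of~\cite{Ryan} in fact carries an explicit ``$+1$'' precisely to accommodate this issue. For constant $f$ both sides of the claim vanish, so the issue arises only for nonconstant $f$, and I expect this to be the main (if minor) obstacle in the write-up; the cleanest route is probably to combine the inequality above with a universal lower bound on $\Inf_p(f)$ for nonconstant Boolean $f$, e.g.~using $\Inf_p(f) \geq \Var(f)/(4p(1-p))$, which follows from Equation~(\ref{Eq:Intro-new1}) by restricting the sum there to $|S|\geq 1$. The conceptual heart of the argument is the Cauchy--Schwarz bound of the first paragraph; everything after it is a direct transcription of the uniform-measure proof.
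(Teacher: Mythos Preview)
Your Jensen/level-decomposition is correct through the display
\[
\Ent_p(f) \;\leq\; 4p(1-p) \log(n) \cdot \Inf_p(f) + \log(n+1),
\]
but the proposed absorption of the additive $\log(n+1)$ fails. The bound $\Inf_p(f) \geq \Var(f)/(4p(1-p))$ gives only $p(1-p)\log(n)\,\Inf_p(f) \geq \tfrac{1}{4}\Var(f)\log n$, so you would need $\Var(f)$ bounded below by a universal positive constant---and it is not. Take $p=1/2$ and $f=+1$ except at a single point: then $\Var(f)\asymp 2^{-n}$, $\Inf_{1/2}(f)=n2^{1-n}$, and $p(1-p)\log(n)\,\Inf_p(f)\asymp n\log(n)\,2^{-n}$ is exponentially smaller than $\log(n+1)$.

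The paper repairs exactly the step where you bounded the entropy of the level distribution $(p_k)_{k\ge 0}$ by $\log(n+1)$. Separating out the level-$0$ mass $p_0=1-\epsilon$ (with $\epsilon=\Var(f)$) yields instead the additive term $\epsilon\log(1/\epsilon)+2\epsilon$ that appears as the intermediate step in~\cite{Ryan}; the ``$+1$'' you refer to in their final statement is only a crude upper bound for that quantity. Even this refined additive term cannot be absorbed via the Poincar\'e inequality $4p(1-p)\Inf_p(f)\geq\epsilon$ (same example: $\epsilon\log(1/\epsilon)\asymp n2^{-n}$ versus $\epsilon\log n\asymp \log(n)\,2^{-n}$). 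The paper therefore invokes the edge-isoperimetric inequality, transported from the uniform case via Equation~(\ref{Eq:Red-FK}), to obtain $\epsilon\log(1/\epsilon)+2\epsilon \leq 12p\lfloor\log(1/p)\rfloor\,\Inf_p(f)$. So two ingredients are missing from your outline: the finer treatment of the level-$0$ mass, and an isoperimetric rather than Poincar\'e lower bound on the total influence.
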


\begin{proof}
Assume w.l.o.g. that $p \leq 1/2$. As shown in~\cite{Ryan}, we have:
\[
\Ent_p(f) \leq (\log n +1) \sum_S |S| \hat f(S)^2 + \epsilon
\log(1/\epsilon)+2 \epsilon,
\]
where $1-\epsilon=\hat f(\emptyset)^2$. (Note that this part of
the proof of Theorem~3.2 in~\cite{Ryan} holds without any change
for the biased measure). In order to bound the term $\epsilon
\log(1/\epsilon)+2 \epsilon$, it was shown in Proposition~3.6
of~\cite{Ryan} that by the edge isoperimetric inequality on the
cube, it is bounded from above by $2\Inf_{1/2}(f)$. By
Equation~(\ref{Eq:Red-FK}), this implies that for the measure
$\mu_p$, we have
\[
\epsilon \log(1/\epsilon)+2 \epsilon \leq 12 p \lfloor \log(1/p)
\rfloor \Inf_p(f)
\]
(since the reduction from the biased measure to the uniform measure preserves the
expectation). Thus, by Equation~(\ref{Eq:Intro-new1}),
\[
\Ent_p(f) \leq (\log n +1) \cdot 4p(1-p) \Inf_p(f)+ 12 p \lfloor
\log(1/p) \rfloor \Inf_p(f) \leq cp(1-p) \log(n) \Inf_p(f),
\]
as asserted.
\end{proof}

For $p$ that is inverse polynomially small in $n$, the statement of Claim~\ref{Claim:Biased}
differs from the assertion of the entropy/influence conjecture only by a constant factor.

\section{Functions with a Low Fourier Weight on the High Levels}
\label{sec:high-levels}

In this section we consider the uniform measure $\mu_{1/2}$ on the
discrete cube, and study Boolean functions with a low Fourier
weight on the high levels. In order to simplify the expression of
the Fourier expansion, we replace the domain by $\{-1,1\}^n$. As a
result, the characters are given by the formula
\[
u_{\{i_1,\ldots,i_r\}}(x)=x_{i_1} \cdot x_{i_2} \cdot \ldots \cdot x_{i_r},
\]
and thus, the Fourier expansion of a function is simply its representation
as a multivariate polynomial.

\begin{proposition}
Let $f:\{-1,1\}^n_{1/2} \rightarrow \mathbb{Z}$, such that all the
Fourier weight of $f$ is concentrated on the first $k$ levels.
Then all the Fourier coefficients of $f$ are of the form
\[
\hat f(S) = a(S) \cdot 2^{-k},
\]
where $a(S) \in \mathbb{Z}$. In particular, $\Ent_{1/2}(f) \leq
2k$.
\end{proposition}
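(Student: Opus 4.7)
The plan is to pass from $\{-1,1\}^n$ to $\{0,1\}^n$ via the substitution $x_i = 1 - 2 y_i$ and to work in the \emph{monomial basis} $\{\prod_{i \in T} y_i\}_{T \subset [n]}$, in which the integrality of $f$ becomes manifest. Let $\tilde f(y) = f(1-2y)$; this is again integer-valued on $\{0,1\}^n$, and since the substitution is a bijection that is linear (hence degree-preserving) in each variable, the multilinear degree of $\tilde f$ as a polynomial in the $y_i$'s equals the Walsh degree of $f$, which is at most $k$.

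First I would apply Möbius inversion on the Boolean lattice to write
\[
\tilde f(y) = \sum_{|T| \leq k} m_T \prod_{i \in T} y_i, \qquad m_T = \sum_{R \subset T} (-1)^{|T \setminus R|} \tilde f(\mathbf{1}_R).
\]
The integer-valuedness of $\tilde f$ immediately gives $m_T \in \mathbb{Z}$, and the degree hypothesis gives $m_T = 0$ whenever $|T| > k$. This is the essential reason for changing basis: in the Walsh basis the condition ``$f$ integer-valued'' is opaque, whereas in the monomial basis it reduces to integrality of finite differences.

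Next I would substitute $y_i = (1-x_i)/2$ back in, expand $\prod_{i \in T}(1-x_i) = \sum_{R \subset T}(-1)^{|R|}\prod_{i \in R} x_i$, and collect monomials in $x$ to read off each Walsh coefficient:
\[
\hat f(R) = (-1)^{|R|} \cdot 2^{-k} \sum_{T \supset R,\ |T| \leq k} m_T \cdot 2^{k-|T|}.
\]
Since $k - |T| \geq 0$ throughout the sum, the inner sum is an integer, and we obtain $\hat f(R) = a(R) \cdot 2^{-k}$ with $a(R) \in \mathbb{Z}$, which is the main assertion.

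For the entropy bound in the Boolean case, every nonzero $\hat f(S)^2$ is an integer multiple of $4^{-k}$ and therefore at least $4^{-k}$, so $\log(1/\hat f(S)^2) \leq 2k$ on the support; combined with Parseval's identity $\sum_S \hat f(S)^2 = 1$, this yields $\Ent_{1/2}(f) \leq 2k$. I do not anticipate any real obstacle beyond spotting the right change of basis; once in the monomial basis on $\{0,1\}^n$, the entire argument is a routine bookkeeping computation.
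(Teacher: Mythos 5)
Your proof is correct, and it takes a genuinely different route from the paper's. The paper argues by induction on the degree $k$: it forms the discrete derivative $f^i$, notes that $2f^i$ is integer-valued of degree at most $k-1$, applies the induction hypothesis to conclude that the coefficients $\hat f(S\cup\{i\})=\hat f^i(S)$ lie in $2^{-k}\mathbb{Z}$, and handles $\hat f(\emptyset)$ separately at the end by integrality of $f$ itself. You instead make a single change of variables to the monomial basis on $\{0,1\}^n$, where integrality of the coefficients $m_T$ is immediate from M\"obius inversion, and then read off every Walsh coefficient at once from the explicit expansion $\hat f(R)=(-1)^{|R|}\sum_{T\supset R,\,|T|\le k} m_T\,2^{-|T|}$. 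The two arguments are of course cousins (the $m_T$ are exactly the iterated discrete derivatives evaluated at a point), but yours is non-inductive, gives a closed formula for the integers $a(S)$, and makes the source of the denominator $2^{-k}$ completely transparent --- each monomial of degree $|T|$ contributes denominator $2^{|T|}\le 2^k$. The paper's induction is slightly shorter on the page and avoids the change of basis, but needs the small extra step for the constant coefficient. Your derivation of the entropy bound from the main assertion (each nonzero $\hat f(S)^2$ is at least $4^{-k}$, so each $\log(1/\hat f(S)^2)\le 2k$, then Parseval) is exactly the intended one.
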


\begin{proof}
The proof is by induction on $k$. The case $k=0$ is trivial.
Assume that the assertion holds for all $k \leq d-1$, and let $f$
be a function of Fourier degree $d$ (i.e., all its Fourier
coefficients are concentrated on the $d$ lowest levels). For $1
\leq i \leq n$, let $f^i$ be the discrete derivative of $f$ with
respect to the $i$th coordinate, i.e.,
\[
f^{i}(x_1,\ldots,x_{i-1},x_{i+1},\ldots,x_n)=
\frac{f(x_1,\ldots,x_{i-1},1,x_{i+1},\ldots,x_n)-
f(x_1,\ldots,x_{i-1},-1,x_{i+1},\ldots,x_n)}{2}.
\]
It is easy to see that if $f=\sum_S \hat f(S) u_S$, then
the Fourier expansion of $f^{i}$ is given by:
\begin{equation}\label{Eq3.1}
f^{i} = \sum_{S \subset (\{1,2,\ldots,n\} \setminus \{i\})}
\hat f(S \cup \{i\}) u_S.
\end{equation}
Hence, $f^{i}$ is of Fourier degree at most $d-1$. Note that by
the definition of $f^{i}$, we have $2f^i(x) \in \mathbb{Z}$ for
all $x \in \{-1,1\}^{n-1}$, and thus by the induction hypothesis,
the Fourier coefficients of $f^{i}$ satisfy $2 \hat f^{i}(S) = a(S)
\cdot 2^{-d+1}$, where $a(S) \in \mathbb{Z}$. This holds for any
$1 \leq i \leq n$, and therefore, by Equation~(\ref{Eq3.1}), all
the Fourier coefficients of $f$ (except, possibly, for $\hat
f(\emptyset)$), are of the form $\hat f(S) = a(S) \cdot 2^{-d}$,
where $a(S) \in \mathbb{Z}$. Finally, $\hat f(\emptyset)$ must be
also of this form, since otherwise $f(x)$ cannot be an integer.
This completes the proof.
\end{proof}

In an unpublished work~\cite{Bourgain-Kalai2}, Bourgain and Kalai
obtained a stronger result:
\begin{theorem}\label{Thm:BK}
Let $f:\{-1,1\}^n \rightarrow \{-1,1\}$, and assume that there
exist $c_0>0$, $0<a<1/2$, and $k$, such that for all $t$,
\[
\sum_{\{S:|S|>t\}} \hat f(S)^2 \leq e^{c_0 k} \cdot e^{-at},
\]
then for any $\alpha>1$, there exists a set $B_{\alpha}$, such
that:
\begin{enumerate}
\item $\log |B_{\alpha}| \leq C \cdot \alpha k$, where $C$ depends
only on $a$ and $c_0$.

\item $\sum_{S \not \in B_{\alpha}} \hat f(S)^2 \leq n^{-\alpha}$.
\end{enumerate}
\end{theorem}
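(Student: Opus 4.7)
The plan is to combine a Fourier truncation with a level-by-level thresholding, using the exponential decay hypothesis at two scales and the Boolean structure of $f$ to tame the per-level mass.

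\emph{Step 1 (truncation).} Choose a truncation degree $T$ so that the high-frequency tail $\sum_{|S|>T} \hat f(S)^2$ uses up at most half the allowed error. The hypothesis makes $T = \lceil (c_0 k + \alpha \log n + \log 2)/a \rceil$ sufficient: then $\sum_{|S|>T} \hat f(S)^2 \le e^{c_0 k - aT} \le n^{-\alpha}/2$. All coefficients admitted to $B_\alpha$ will lie at levels $\le T$.

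\emph{Step 2 (level-by-level thresholding).} For each $t \le T$, let $W_t = \sum_{|S|=t} \hat f(S)^2$, so that the hypothesis at level $t-1$ yields $W_t \le e^{c_0 k - a(t-1)}$. Put into $B_\alpha$ the level-$t$ characters with $\hat f(S)^2 \ge \tau^2$ for a common threshold $\tau^2 = e^{-C\alpha k}$, with $C = C(a, c_0)$ to be fixed. Markov's inequality admits at most $W_t/\tau^2$ coefficients at level $t$, so summing the geometric series in $W_t$ and absorbing $T$ into the constant yields $\log |B_\alpha| \le C' \alpha k$ as required.

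\emph{Step 3 (controlling the discarded mass).} It remains to show that the discarded low-degree contribution $\sum_{|S| \le T,\ \hat f(S)^2 < \tau^2} \hat f(S)^2$ is at most $n^{-\alpha}/2$. The naive bound $\tau^2 \binom{n}{t}$ at each level is too weak, as it introduces an unwanted dependence on $n$. The Boolean structure of $f$ must be used here. A natural route is the Bonami--Beckner hypercontractive estimate applied to the homogeneous part $f_{=t}$: since $f_{=t}$ has degree $t$ with $\|f_{=t}\|_2^2 = W_t$, we get $\sum_{|S|=t} \hat f(S)^4 \le \|f_{=t}\|_4^4 \le 9^t W_t^2$. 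A dyadic decomposition of the sub-threshold coefficients by magnitude, combined with this $\ell^4$ bound, yields a bound on the discarded level-$t$ mass that depends only on $W_t, \tau, 9^t$ and not on $n$; a geometric sum over $t \le T$ then fits inside the remaining budget $n^{-\alpha}/2$. An alternative route is to exploit a near-quantization phenomenon in the spirit of the preceding proposition, writing $f$ as a small perturbation of a degree-$T$ Boolean function whose nonzero Fourier coefficients are integer multiples of $2^{-T}$ and are therefore automatically above threshold.

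\emph{Main obstacle.} Controlling the discarded mass in Step 3 is the crux. Any approach that ignores the Boolean constraint inevitably inherits the trivial $\binom{n}{t}$-dependent per-level bound and produces a bound on $\log |B_\alpha|$ that grows with $n$. The whole point of the theorem is that exponential Fourier decay \emph{together with} $f$ being Boolean forces the significant Fourier coefficients to be few in number, independently of $n$, and any successful proof must leverage Boolean-ness (via hypercontractivity, via Nisan--Szegedy-type rigidity for the low-degree part, or via quantization) to break the dimensional barrier at each level.
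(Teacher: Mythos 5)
The paper does not actually prove Theorem~\ref{Thm:BK}; it is quoted from an unpublished manuscript of Bourgain and Kalai, with only the remark that the proof uses iterated discrete derivatives and the Bonami--Beckner inequality. So the question is whether your plan stands on its own, and it does not: Step 3 is not merely ``the crux,'' it is an unproved assertion, and both mechanisms you offer for it fail. The hypercontractive bound $\sum_{|S|=t}\hat f(S)^4 \le 9^t W_t^2$ controls the \emph{number of large} coefficients at level $t$ (which you do not need --- Parseval already caps the number of coefficients with $\hat f(S)^2\ge\tau^2$ by $\tau^{-2}$); it gives no upper bound on the \emph{total mass of the small} coefficients. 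Indeed, if the level-$t$ weight $W_t$ is spread over $M$ coefficients of equal size $W_t/M$ with $M$ enormous, then $\sum_{|S|=t}\hat f(S)^4 = W_t^2/M$ is tiny --- perfectly consistent with your $\ell^4$ estimate --- while the discarded mass is all of $W_t$, which need not be anywhere near $n^{-\alpha}$. A dyadic decomposition by magnitude does not change this: each dyadic block of sub-threshold coefficients can carry essentially all of $W_t$ while keeping the fourth moment negligible. Ruling out this thinly-spread scenario is exactly the rigidity statement about Boolean functions that the theorem encodes; your plan assumes it rather than proves it.

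Your alternative route via quantization also does not close the gap. The integrality statement (the preceding proposition in the paper) applies to functions that are \emph{exactly} of degree $T$; your $f$ is only approximately so, and truncating it destroys Boolean-ness and hence the $2^{-T}$ granularity. Even granting exact degree $T$, you chose $T = \Theta((c_0k+\alpha\log n)/a)$, so the resulting count $4^{T}$ gives $\log|B_\alpha| \le C(k+\log n)$, which carries a $\log n$ term and violates the requirement that $\log|B_\alpha|\le C\alpha k$ with $C$ depending only on $a$ and $c_0$. The intended argument (per the authors' remark) must run hypercontractivity through the $d$-th discrete derivatives of $f$, in the spirit of Friedgut's junta theorem and the Bourgain--Kalai group-symmetry paper, to locate a structured family of $e^{O(\alpha k)}$ sets capturing the mass; a level-by-level threshold on the raw coefficients cannot see that structure.
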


The theorem asserts that if the Fourier weight of $f$ beyond the
$k$th level decays exponentially, then most of the Fourier weight
of $f$ is concentrated on $exp(Ck)$ coefficients, and thus,
$\Ent_{1/2}(f) \leq C' k$ (for an appropriate choice of $C'$). The
proof uses the $d$th discrete derivative of $f$ (like our proof
above), and the Bonami-Beckner hypercontractive
inequality~\cite{Bonami,Beckner}. We note that the exact
dependence of $C$ on $a$ (i.e., the rate of the exponential decay)
in the assertion of the theorem, which is important if $a$ is
allowed to be a function of $n$, is of order $C=\Theta(a^{-1} \log
(a^{-1}))$.

\bigskip

\noindent \textbf{A tensorisation technique.}
In~\cite{Kalai-Blog}, Kalai observed that the entropy/influence
conjecture tensorises, in the following sense. For
$f:\{-1,1\}_{1/2}^l \rightarrow \{-1,1\}$ and $g:\{-1,1\}_{1/2}^m
\rightarrow \{-1,1\}$, define $f \otimes g:\{-1,1\}^{l+m}_{1/2}
\rightarrow \{-1,1\}$ by:
\[
f \otimes g(x_1,\ldots,x_{l+m}) = f(x_1,\ldots,x_l) \cdot
g(x_{l+1},\ldots,x_{l+m}).
\]
Furthermore, let
\[
f^{\otimes N} = f \otimes f \otimes \ldots \otimes f,
\]
where the tensorisation is performed $N$ times. It is easy to see
that $\Inf_{1/2}(f^{\otimes N})=N \cdot \Inf_{1/2}(f)$ and
$\Ent_{1/2}(f^{\otimes N}) = N \cdot \Ent_{1/2}(f)$. Hence,
proving the entropy/influence conjecture for any ``tensor power''
of $f$ is equivalent to proving the conjecture for $f$ itself.
This observation was used in~\cite{Ryan} to deduce that it is
sufficient to prove a seemingly weaker version of the conjecture:
$\Ent_{1/2}(f) \leq c \Inf_{1/2}(f) + o(n)$, where $n$ is the
number of variables.

\medskip

We observe that tensorisation can be used to enhance the rate of
decay of the Fourier coefficients. By the Law of Large Numbers, as
$N \rightarrow \infty$, the level of the Fourier coefficients of
$f^{\otimes N}$ is concentrated around its expectation, which is
$N \cdot \Inf_{1/2}(f)$, and the rate of decay above that level,
i.e., $\sum_{|S|>t} \widehat{f^{\otimes n}}(S)^2$ becomes
``almost'' inverse exponential in $t$. This holds even if the rate
of decay of the Fourier coefficients of $f$ is much slower (like
the Majority function, for which $\sum_{|S|>t} \widehat{MAJ}(S)^2
\approx t^{-1/2}$). Therefore, if one obtains a result similar to
Bourgain-Kalai's Theorem~\ref{Thm:BK} for a slower rate of decay,
e.g., under the weaker assumption $\sum_{|S|>t} \hat{f}(S)^2 \leq
e^{c_0 \sqrt{k}} \cdot e^{-a \sqrt{t}}$, then the result can be
enhanced to any rate of decay, by tensoring the function to itself
until its rate of decay reaches $e^{-a \sqrt{t}}$. However, we
weren't able to find such generalization of the Bourgain-Kalai
result.

\section{Concluding Remarks}
\label{sec:remarks}

We conclude this paper with two remarks related to the
entropy/influence conjecture.

\medskip

\noindent \textbf{A weaker upper bound on the entropy that can be proved easily.}
As mentioned in Section~\ref{sec:biased}, it was shown in~\cite{Ryan}
that with respect to the uniform measure, one can easily prove the
following weaker upper bound on the entropy of any Boolean function:
\[
\Ent_{1/2}(f) \leq (\log n +1) \Inf_{1/2}(f)+1.
\]
We provide an independent proof of a slightly stronger claim.

\begin{claim}\label{Claim:Information}
For any $n$ and for any
$f:\{0,1\}^n_{1/2} \rightarrow \mathbb{R}$, we have
\[
\Ent_{1/2}(f) \leq \sum_{i=1}^n h(\Inf^{1/2}_i(f))  \leq 2
\Inf_{1/2}(f)(\log n - \log \Inf_{1/2}(f)),
\]
where
\[
h(x) = -x \log x - (1-x) \log(1-x).
\]
\end{claim}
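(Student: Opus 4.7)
The plan is to derive the two inequalities by different tools: the first from subadditivity of Shannon entropy applied to the Fourier-squared distribution, and the second from concavity of $h$ together with an elementary pointwise bound.

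For the first inequality I would observe that (assuming the $L^2$ normalization implicit in the entropy formula, which is automatic for a $\{-1,1\}$-Boolean $f$) the collection $\{\hat f(S)^2\}_{S \subset \{1,\ldots,n\}}$ is a probability distribution $P$ on the power set $2^{\{1,\ldots,n\}}$, and $\Ent_{1/2}(f)$ is exactly its Shannon entropy $H(P)$. Identifying each $S$ with its $\{0,1\}^n$ indicator vector turns $P$ into the joint law of a random vector $X = (X_1,\ldots,X_n)$ whose $i$-th marginal is Bernoulli with parameter $\Pr[X_i = 1] = \sum_{S \ni i}\hat f(S)^2 = \Inf^{1/2}_i(f)$, by the standard Fourier formula for influences. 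Subadditivity of Shannon entropy, $H(X_1,\ldots,X_n) \leq \sum_i H(X_i)$, then immediately yields the first inequality, since $H(X_i) = h(\Inf^{1/2}_i(f))$.

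For the second inequality, I would apply Jensen's inequality to the concave function $h$ on $[0,1]$ to obtain $\sum_i h(\Inf^{1/2}_i(f)) \leq n \cdot h(I/n)$, where $I := \Inf_{1/2}(f)$, and then invoke the elementary pointwise estimate $h(y) \leq -2y\log y$ valid for $y \in (0,1/2]$. The latter reduces to the short calculus fact $-(1-y)\log(1-y) \leq -y\log y$ on that interval (with equality at $y = 1/2$, and the gap easily analyzed towards $y \to 0$). Substituting $y = I/n$ gives $n\, h(I/n) \leq 2I \log(n/I) = 2I(\log n - \log I)$, as desired.

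The main technical subtlety — and the step I expect to demand the most care — is the regime $I/n > 1/2$, where the pointwise bound on $h$ no longer applies directly. Here I would fold each coordinate via $y_i := \min(\Inf^{1/2}_i(f),\, 1 - \Inf^{1/2}_i(f))$, use the symmetry $h(y) = h(1-y)$, and rerun the Jensen-plus-pointwise argument on the folded sum, which now lies in $[0,n/2]$; a short check then reduces the large-$I$ case to an analogous estimate with $n-I$ in place of $I$. With this edge case accounted for, chaining the two inequalities produces the claim.
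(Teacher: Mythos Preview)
Your first inequality is proved exactly as the paper does it: view $\{\hat f(S)^2\}$ as the law of a random set $S$, write $X_i = 1_{\{i\in S\}}$, and apply subadditivity $H(X_1,\ldots,X_n)\le \sum_i H(X_i)$.

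For the second inequality your route genuinely diverges from the paper's. You apply Jensen to the concave $h$ to get $\sum_i h(\Inf_i)\le n\,h(I/n)$ and then the pointwise bound $h(y)\le -2y\log y$ for $y\le 1/2$, yielding $2I(\log n-\log I)$ directly. The paper instead bounds each summand pointwise by $h(\Inf_i)\le 2\Inf_i - 2\Inf_i\log\Inf_i$, rewrites $\sum_i \Inf_i(-\log\Inf_i)$ as $I\cdot\big(H(Y)-\log I\big)$ for the auxiliary random variable $Y$ with $\Pr[Y=i]=\Inf_i/I$, and then uses $H(Y)\le\log n$. Your argument is shorter and gives exactly the stated bound in the regime $I/n\le 1/2$; the paper's argument produces the slightly weaker $2I(1+\log n-\log I)$ but covers all $I\in[0,n]$ uniformly.

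The place where your plan actually fails is the edge case $I/n>1/2$. Your folding trick $y_i=\min(\Inf_i,1-\Inf_i)$ is fine and gives $\sum_i h(\Inf_i)\le 2J(\log n-\log J)$ with $J=\sum_i y_i\le n/2$, but the ``short check'' that this is $\le 2I(\log n-\log I)$ is not available: the function $x\mapsto 2x(\log n-\log x)$ is \emph{decreasing} past $x=n/e$, so $J<I$ does not help. In fact the second inequality as stated is simply false in this regime---e.g.\ for $n=2$ and $\Inf_1=\Inf_2=3/4$ one has $\sum_i h(\Inf_i)=2h(1/4)\approx 1.62$ bits while $2I(\log n-\log I)\approx 1.25$ bits. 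The paper's own proof does not establish the literal statement either; it finishes with the bound carrying the extra additive $+1$ inside the parentheses, which does hold for all $I$. So your main argument is a clean alternative to the paper's in the regime that matters, but the edge case cannot be ``short-checked'' away; at best you recover the same $+1$-weakened bound the paper actually proves.
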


\begin{proof}
As the proof deals only with the uniform measure on the discrete
cube, we write $\Ent(f)$ and $\Inf(f)$ instead of $\Ent_{1/2}(f)$
and $\Inf_{1/2}(f)$ during the proof.

Let $S \subset \{1,2,\ldots,n\}$ be chosen according to the
Fourier distribution (i.e., $\Pr[S=S_0]=\hat f(S_0)^2$), and let
$X_i = 1_{\{i \in S\}}$. Then by the basic rules of entropy,
\[
\Ent(f) = H(S) = H(X_1,\ldots,X_n) \leq \sum_{i=1}^n H(X_i) =
\sum_{i=1}^n h(\Inf_i(f)),
\]
thus obtaining the first inequality. Note that if $\Inf_i(f) \geq
0.5$, then $h(\Inf_i(f)) \leq 2 \Inf_i(f)$, and otherwise,
$h(\Inf_i(f)) \leq -2 \Inf_i(f) \log \Inf_i(f)$. Therefore,
\begin{align*}
\frac{1}{2} \Ent(f) &\leq \Inf(f) + \sum_{i=1}^n
\Inf_i(f) (-\log \Inf_i(f)) \\
&= \Inf(f) \left(1 + \sum_{i=1}^n \frac{\Inf_i(f)}{\Inf(f)}
 \cdot \big(-\log \frac{\Inf_i(f)}{\Inf(f)} - \log \Inf(f)\big) \right).
\end{align*}
 We note that the expression $\sum_{i=1}^n \Inf_i(f)/\Inf(f) (-\log \Inf_i(f)/\Inf(f))$
 is the entropy of the random variable $Y$ defined by $\Pr[Y=i]=\Inf_i(f)/\Inf(f)$ which is
 supported on $\{1,2,\ldots,n\}$, and is therefore bounded by $\log n$. We thus conclude that
\[
\frac{1}{2} \Ent(f) \leq \Inf(f) (1 + \log n - \log \Inf(f))
\]
as asserted.
\end{proof}

It is easy to see that the bound using the entropy is stronger in some cases, in
particular when there is variability in the influences of different coordinates.

We note that the proof does not use the fact that $f$ is Boolean
and indeed it could not provide a proof of the Entropy/Influence
conjecture, as can be seen, e.g., for the majority function, where
$\Inf_{1/2}(f)$ is of order $\sqrt{n}$ while $\sum_{i=1}^n
h(\Inf^{1/2}_i(f))$ is of order $\sqrt{n} \log n$.

\medskip

\noindent \textbf{Relation to Friedgut's characterization of
functions with a low influence sum.} In~\cite{Friedgut1}, Friedgut
showed that any Boolean function $f:\{0,1\}^n_{p} \rightarrow
\{0,1\}$ essentially depends on at most $C(p)^{\Inf(f)}$
coordinates, where $C(p)$ depends only on $p$. The main step of
the proof is to show that most of the Fourier weight of the
function is concentrated on sets that contain one of these
coordinates. A stronger claim one may hope to prove is that most
of the Fourier weight is concentrated on at most $C(p)^{\Inf(f)}$
coefficients. Formally, we raise the following conjecture that
resembles the assertion of Bourgain-Kalai's theorem:
\begin{conjecture}
For any $0<p<1$, there exists a constant $C(p)>0$ such that for
any $\epsilon>0$, for any $n$ and for any $f:\{-1,1\}^n_p
\rightarrow \{-1,1\}$, there exists a set $B_\epsilon \subset
\{0,1\}^n$ such that:
\begin{enumerate}
\item $\log |B_{\epsilon}| \leq C(p) \cdot \Inf(f)$, and

\item $\sum_{S \not \in B_{\epsilon}} \hat f(S)^2 < \epsilon$.
\end{enumerate}
\end{conjecture}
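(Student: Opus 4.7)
The natural plan is to reduce the conjecture to Bourgain-Kalai's Theorem~\ref{Thm:BK} by first restricting $f$ to a junta via Friedgut's theorem~\cite{Friedgut1} and then using the tensorisation technique from Section~\ref{sec:high-levels} to amplify the Fourier tail decay.

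First I would apply Friedgut's theorem to $f$ to obtain a set $J \subset \{1,\ldots,n\}$ with $|J| \leq C_1(p,\epsilon)^{\Inf(f)}$ and a junta $g$ on $J$ satisfying $\|f-g\|_2^2 < \epsilon/3$. Then $\sum_{S \not\subset J} \hat f(S)^2 < \epsilon/3$, so it suffices to find $B \subset 2^J$ of size at most $e^{C(p)\Inf(f)}$ capturing all but $\epsilon/3$ of the Fourier mass of $g$.

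Next, to apply Theorem~\ref{Thm:BK} to $g$ one needs exponentially decaying Fourier tails, whereas Markov's inequality gives only the polynomial bound $\sum_{|S|>t}\hat g(S)^2 \leq \Inf(g)/t$. Following the tensorisation idea from Section~\ref{sec:high-levels}, I would replace $g$ by $G = g^{\otimes N}$ for large $N$: the level $|S|$ under the Fourier distribution of $G$ becomes a sum of $N$ i.i.d.\ bounded copies of the level random variable of $g$, concentrating sharply around $N\Inf(g)$. Feeding the resulting tail estimate into Theorem~\ref{Thm:BK} with $k \asymp N\Inf(g)$ produces a set of size $\leq e^{CN\Inf(g)} = e^{C\Inf(G)}$ carrying $1-\epsilon$ of the Fourier mass of $G$, from which one would then try to extract a small $B$ for $g$ via the product structure.

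The main obstacle is precisely the gap flagged at the end of Section~\ref{sec:high-levels}: tensorisation yields a concentrated but not strictly exponential tail, so Theorem~\ref{Thm:BK} as stated does not apply, and the required strengthening of Bourgain-Kalai to slower decay rates is itself open. A second, separate difficulty is that $|B|$ does not tensorise in a clean multiplicative fashion the way the spectral entropy does --- the naive projection $\{S_1 : (S_1,\ldots,S_N) \in B(G)\}$ only yields $|B(g)| \leq e^{CN\Inf(g)}$, not the $N$-independent bound we need. Since the conjecture also follows from the full entropy/influence conjecture (via the asymptotic equipartition property applied to the Fourier distribution), any unconditional proof appears to require genuinely new hypercontractive input of essentially the same strength as entropy/influence itself.
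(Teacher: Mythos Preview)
The statement you are attempting to prove is presented in the paper as a \emph{conjecture}, not a theorem; the paper offers no proof and explicitly treats it as an open problem stronger than Friedgut's theorem. So there is no ``paper's proof'' to compare against, and your proposal is not so much a proof as a diagnosis of why the natural approach via Friedgut plus tensorisation plus Bourgain--Kalai does not close. On that diagnosis you are essentially right, and your two obstacles (tensorisation does not produce the strictly exponential tail Theorem~\ref{Thm:BK} requires; the size of the concentrating set does not untensorise multiplicatively) are exactly the sticking points the paper itself flags at the end of Section~\ref{sec:high-levels}.

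One point deserves a second look. You assert parenthetically that the conjecture follows from the full entropy/influence conjecture via the AEP. The direct route --- Markov's inequality applied to $\log(1/\hat f(S)^2)$ --- only gives a set $B_\epsilon$ with $\log|B_\epsilon| \leq \Ent(f)/\epsilon \leq C\,\Inf(f)/\epsilon$, which carries an $\epsilon$ in the exponent that the conjecture forbids. Invoking the AEP instead applies to the product measure, i.e.\ to $f^{\otimes N}$, and you have already (correctly) observed that the concentrating set for $f^{\otimes N}$ does not project down to one for $f$ of the required size. So entropy/influence does not obviously imply the conjecture as stated; if anything, the conjecture may be genuinely stronger in its $\epsilon$-uniformity, which is worth noting explicitly rather than folding into a parenthetical.
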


This conjecture is clearly stronger than Friedgut's theorem and
even implies a variant of Mansour's conjecture~\cite{Mansour}
(since as shown in~\cite{Boppana}, if a Boolean function $f$ can
be represented by an $m$-term DNF, then $\Inf_{1/2}(f) = O(\log
m)$ ), but it still does not imply the entropy/influence
conjecture, since the remaining Fourier coefficients (whose total
Fourier weight is at most $\epsilon$) can still contribute $n
\cdot \epsilon$ to $\Ent_{1/2}(f)$.

\section{Acknowledgements}

We are grateful to Gil Kalai for introducing us to his unpublished
work with Jean Bourgain~\cite{Bourgain-Kalai2}, and to Ryan
O'Donnell for sending us his recent work~\cite{Ryan}.

\end{document}